\def\ps@headings{\ps@empty
  \def\@evenhead{%
    \setTrue{runhead}%
    \normalfont\scriptsize
    \hfil
    \def\thanks{\protect\thanks@warning}%
    \leftmark{}{}\hfil}%
  \def\@oddhead{%
    \setTrue{runhead}%
    \normalfont\scriptsize \hfil
    \def\thanks{\protect\thanks@warning}%
    \rightmark{}{}\hfil}%
  \def\@oddfoot{\normalfont\scriptsize \hfil\thepage\hfil}%
  \let\@evenfoot\@oddfoot
  \let\@mkboth\markboth
}
\numberwithin{equation}{section}	
\newtheorem{lem}{Lemma}[section]
\newtheorem{tw}[lem]{Theorem}
\newtheorem*{wtw}{Theorem}
\newtheorem{prop}[lem]{Proposition}
\theoremstyle{definition}
\theoremstyle{remark}
\newtheorem{rem}[lem]{Remark}
\begin{document}

\title[Solvable subgroup theorem]{Solvable Subgroup Theorem for simplicial nonpositive curvature}
\author[T. Prytu{\l}a]{Tomasz Prytu\l a}
\date{\today}

\address{School of Mathematics, University of Southampton, Southampton SO17 1BJ, UK}
\email{t.p.prytula@soton.ac.uk}

\begin{abstract}
Given a group $G$ with bounded torsion that acts properly on a systolic complex, we show that every solvable subgroup of $G$ is finitely generated and virtually abelian of rank at most $2$. In particular this gives a new proof of the above theorem for systolic groups.
The main tools used in the proof are the Product Decomposition Theorem and the Flat Torus Theorem.
\end{abstract}

\subjclass[2010]{20F67 (Primary), 20F65, 20F69 (Secondary)}
\keywords{Systolic complex, solvable subgroup theorem}
\thanks{The author was supported by the EPSRC First Grant EP/N033787/1.}
\maketitle

\section{Introduction}
\label{sec:intro}

For a group $G$ the \emph{Solvable Subgroup Theorem} states that any solvable subgroup of $G$ is finitely generated and virtually abelian. This theorem holds for several classes of groups, eg.,\ fundamental groups of nonpositively curved Riemannian manifolds \cite{GrWo,LaYa} and more generally $\mathrm{CAT}(0)$ groups \cite{BH}, translation discrete groups of finite virtual cohomological dimension \cite{Con}, biautomatic groups whose abelian subgroups are finitely generated \cite{GeSh}. In this note we show this theorem for groups acting on systolic complexes. 

\begin{wtw}[Solvable Subgroup Theorem]\label{tw:main}Let $G$ be a group acting properly on a uniformly locally finite systolic complex and suppose that there is a bound on the order of finite subgroups of $G$. Then every solvable subgroup of $G$ is finitely generated virtually abelian of rank at most $2$.
\end{wtw}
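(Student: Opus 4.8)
The plan is to argue by induction on the derived length $d$ of a solvable subgroup $H \leq G$, using the Flat Torus Theorem to produce invariant flats for abelian subgroups and the Product Decomposition Theorem to split these flats off and descend to a complex of smaller flat-dimension. Two structural features of systolic complexes drive the argument: every infinite-order element acts as a hyperbolic (semisimple) isometry, and a systolic complex contains no isometrically embedded flat of dimension $\geq 3$. Throughout, the bound on the orders of finite subgroups lets me pass freely to finite-index subgroups and treat torsion as a bounded nuisance, so it suffices to produce a finite-index subgroup of $H$ that is free abelian of rank at most $2$.

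For the base case I take $H = A$ abelian. Applying the Flat Torus Theorem to the finitely generated subgroups of $A$ shows that each such subgroup is free abelian (up to the bounded torsion) and acts properly cocompactly on an invariant flat $\mathbb{E}_{\Delta}^{k}$; since no flat of dimension $\geq 3$ embeds, every finitely generated subgroup---and hence $A$ itself---has torsion-free rank at most $2$. Finite generation is the one point that does not come for free: a finite-rank abelian group could still fail to be finitely generated (e.g.\ $\mathbb{Q}$). I would rule this out using that properness together with uniform local finiteness forces the translation lengths of hyperbolic isometries into a discrete subset of $(0,\infty)$ bounded away from $0$; an infinitely divisible element would have roots of translation length tending to $0$, a contradiction. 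Thus $A$ is finitely generated and virtually $\mathbb{Z}^{k}$ with $k \leq 2$.

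For the inductive step, let $H$ have derived length $d \geq 2$ and set $N = H^{(d-1)}$, the last nontrivial term of the derived series, which is abelian and normal in $H$. By the base case $N$ is finitely generated and virtually $\mathbb{Z}^{k}$ with $k\leq 2$. The Flat Torus Theorem applied to $N$ yields a minimal set $\minset{N}$ which, by the Product Decomposition Theorem, splits canonically as $Y \times \mathbb{E}_{\Delta}^{k}$, with $N$ acting by translations on the Euclidean factor and trivially on $Y$. Because $N$ is normal, $H$ preserves $\minset{N}$ and---by uniqueness of the decomposition---its splitting, so the $H$-action on the systolic factor $Y$ factors through $\bar H = H/N$, which is solvable of derived length $\leq d-1$ and (by a standard argument) acts properly on $Y$. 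By induction $\bar H$ is finitely generated and virtually $\mathbb{Z}^{m}$ with $m \leq 2$, with an invariant flat $\mathbb{E}_{\Delta}^{m}\subseteq Y$. Lifting this $\mathbb{Z}^{m}$ to $H$ and combining it with $N$ produces a finite-index subgroup $L \leq H$ acting properly and cocompactly on the flat $\mathbb{E}_{\Delta}^{m}\times\mathbb{E}_{\Delta}^{k}$, a Euclidean space of dimension $m+k$; by Bieberbach's theorem $L$ is virtually $\mathbb{Z}^{m+k}$. Were $m+k\geq 3$, this would exhibit a $\mathbb{Z}^{3}$ acting properly on the systolic complex, contradicting the absence of $3$-flats; hence $m+k\leq 2$ and $H$ is finitely generated and virtually abelian of rank at most $2$. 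Note that this step automatically excludes the integral Heisenberg group, for which $d=2$, $k=1$, $m=2$ would force a forbidden $3$-flat.

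The hard part will be the inductive step, and specifically three points: (i) establishing that the systolic factor $Y$ is again uniformly locally finite and that $\bar H$ acts properly on it, so that the induction hypothesis genuinely applies; (ii) the uniqueness and $H$-invariance of the product decomposition of $\minset{N}$, which is what permits the descent to $Y$---this is exactly the content I would draw from the Product Decomposition Theorem; and (iii) assembling the two flats into a single Euclidean flat of dimension $m+k$ inside the complex, so that the absence of $3$-flats can be invoked to bound the \emph{total} rank rather than merely the ranks of the two factors separately. The bound on orders of finite subgroups is indispensable throughout, both to make the relevant abelian subgroups genuinely free abelian and to control torsion uniformly along the induction.
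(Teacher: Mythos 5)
Your high-level toolkit (Flat Torus Theorem, Product Decomposition Theorem, bounded torsion, Elsner's rank bound) is the right one, but the inductive step as you describe it has a genuine gap at exactly the three points you defer, and these are not routine verifications. The crux is the descent to the factor $Y$: to invoke the inductive hypothesis for $\bar H = H/N$ you need $Y$ to be a uniformly locally finite systolic complex on which $\bar H$ acts properly with bounded torsion, and you then need a second application of the Flat Torus Theorem \emph{inside} $Y$ to produce the invariant flat $\mathbb{E}_{\Delta}^{m}$ that your Bieberbach step presupposes. The Product Decomposition Theorem of \cite{OsaPry} does not hand you, off the shelf, an $H$-equivariant isometric splitting $\mathrm{Min}(N)\cong Y\times\mathbb{E}_{\Delta}^{k}$ sitting inside $X$ with $Y$ systolic, uniformly locally finite and properly acted on by $\bar H$; establishing that is the real content, not a footnote. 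Without it the rest of the step collapses: an extension $1\to\mathbb{Z}^{k}\to L\to\mathbb{Z}^{m}\to 1$ is not virtually $\mathbb{Z}^{m+k}$ in general (the integral Heisenberg group is the standard counterexample), so the claim that $L$ is virtually abelian carries all the weight and rests entirely on the unproved geometric input. Your base case is also thinner than it needs to be: finite generation of an arbitrary abelian (or virtually abelian) subgroup requires a uniform bound on the indices $[A_i:\langle a\rangle]$ along an ascending chain, which the paper extracts from translation discreteness together with the structure theory of virtually cyclic groups in Lemma~\ref{lem:ascchain}; the ``infinitely divisible element'' heuristic needs this quantitative form to handle torsion and rank two.

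The paper's proof sidesteps all of this by never forming a quotient action. It inducts on solvability rank assuming $H^{(1)}=[H,H]$ (not the last derived term) is already finitely generated virtually abelian of rank at most $2$, passes to a characteristic finite-index free abelian subgroup $A'\leqslant H^{(1)}$ so that $H\leqslant N_G(A')$, and then quotes the packaged consequences of the Flat Torus and Product Decomposition Theorems for normalisers (Lemma~\ref{lem:flattorproduct}): if $A'\cong\mathbb{Z}^{2}$ then $[N_G(A'):A']$ is finite and we are done; if $A'\cong\mathbb{Z}$ then $H$ is virtually $F_n\times\mathbb{Z}$ and solvability forces $n\leqslant 1$. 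Note that in the rank-one case the decisive obstruction is not the absence of $3$-flats but the fact that the complementary factor only supports virtually free groups --- your flat-assembly argument would not see this. If you want to pursue your route, you would essentially have to reprove Lemma~\ref{lem:flattorproduct}(\ref{item:norm1}) and (\ref{item:norm2}) along the way, at which point the paper's argument is strictly shorter.
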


A systolic complex is a simply connected simplicial complex that is flag and that has no induced cycles of length $4$ or $5$. The latter condition is sometimes referred to as ``simplicial nonpositive curvature''. In our proof we make no use of the definition and only use properties of groups acting on systolic complexes. For a detailed treatment of systolic complexes we refer the reader to \cite{JS2}.

The most natural examples of groups satisfying assumptions of the Theorem are \emph{systolic groups}, i.e., groups acting properly and cocompactly on systolic complexes. For such groups the Theorem follows from their biautomaticity and the fact that their abelian subgroups are finitely generated (see \cite[Theorem~2.2]{HuOs} for a short account of the proof).

In general, the group appearing in the Theorem does not have to be (bi-)automatic and thus a different approach is needed. Our proof is based on the Flat Torus Theorem and the Product Decomposition Theorem and is similar to the one in \cite{BH} for $\mathrm{CAT}(0)$ groups. The main difference is that in our case the above two theorems determine the structure of normalisers of abelian subgroups of $G$, and thus one can proceed more directly than in the $\mathrm{CAT}(0)$ case.

In Section~\ref{sec:translation} we present another approach to the Theorem, via theory of translation discrete groups. We show that $G$ is translation discrete and then using results of \cite{Con} we give an alternative proof of the Theorem. However, this approach seems  less straightforward.

\section{Proof of the Theorem}
Throughout this section we assume that $G$ is as in the Theorem, and let $X$ be a systolic complex on which $G$ acts properly. We need the following two lemmas. The first one sums up the consequences of the Product Decomposition Theorem and the Flat Torus Theorem that are needed in the proof. For the actual statements of these theorems see \cite[Theorem~A]{OsaPry} and \cite[Theorem 6.1]{E1} respectively.

\begin{lem}\label{lem:flattorproduct}Let $A$ be a finitely generated free abelian subgroup of $G$. Then:
\begin{enumerate}
\item \label{item:rank2} the rank of $A$ is at most $2$,
\item \label{item:norm2} if $A \cong \mathbb{Z}^2$ then $A$ has finite index in its normaliser $N_G(A)$,
\item \label{item:norm1}if $A \cong \mathbb{Z}$ then any finitely generated subgroup of the normaliser $N_G(A)$ that contains $A$ is virtually $F_n \times A'$, where $F_n$ is the free group on $n$ generators for some $n \geqslant 0$ and $A'$ is a finite-index subgroup of $A$. 
\end{enumerate}
\end{lem}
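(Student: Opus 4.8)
The plan is to derive all three parts from the two cited results by studying how the relevant normaliser acts on the invariant set furnished by the Flat Torus Theorem \cite[Theorem 6.1]{E1}, decomposed as a product via the Product Decomposition Theorem \cite[Theorem A]{OsaPry}, and then using properness together with the bound on finite subgroups to constrain the factor actions. The recurring mechanism is that a proper action with uniformly bounded torsion has finite point stabilisers of bounded order, so any subgroup fixing a point is finite, and any subgroup acting trivially on a cocompact invariant subcomplex is finite.

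For part~(\ref{item:rank2}), the Flat Torus Theorem attaches to $A \cong \mathbb{Z}^n$ an $A$-invariant isometrically embedded $n$-flat. Since a systolic complex contains no isometrically embedded flat of dimension $\geqslant 3$ (the systolic incarnation of bounded flat rank), we must have $n \leqslant 2$. For part~(\ref{item:norm2}), the invariant flat $F \cong \mathbb{E}^2$ carries $A$ as a cocompact lattice, and $F$ is canonically associated to $A$ (e.g. as $\minset{A}$), so $N_G(A)$ preserves it. Restricting to $F$ gives a homomorphism $N_G(A) \to \mathrm{Isom}(\mathbb{E}^2)$ whose kernel fixes a point of $F$ and is therefore finite, and whose image is properly discontinuous—hence discrete—because properness of the $G$-action on $X$ descends to $F$. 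A discrete group of Euclidean isometries containing the cocompact lattice $A$ is crystallographic, so it is virtually $\mathbb{Z}^2$; thus $N_G(A)$ is virtually $\mathbb{Z}^2$ and $A$ has finite index in it.

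Part~(\ref{item:norm1}) is the substantial one. Let $A = \langle a \rangle \cong \mathbb{Z}$ and let $H \leqslant N_G(A)$ be finitely generated with $A \leqslant H$. Passing to the subgroup of index at most $2$ that centralises $a$ (rather than inverting it), I may assume $A$ is \emph{central} in $H$; this replaces $H$ and $A$ by finite-index subgroups, which is harmless for the conclusion. The Product Decomposition Theorem splits $\minset{a}$ as a product $Y \times \mathbb{R}$, where the $\mathbb{R}$-factor is the translation axis of $a$ and $Y$ is the transverse factor; since the ambient flat rank is at most $2$, $Y$ has flat rank at most $1$ and in the systolic setting is a tree. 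As $H$ centralises $a$, it preserves this decomposition, acting on $\mathbb{R}$ through the translations of $A$ and on $Y$ so that the quotient $Q = H/A$ acts. This induced action on the tree $Y$ is proper with finite stabilisers of bounded order, so by Bass–Serre theory $Q$ is the fundamental group of a finite graph of finite groups, hence virtually free. Finally, taking the preimage $H'$ of a finite-index free subgroup $F_n \leqslant Q$ yields a central extension $1 \to A \to H' \to F_n \to 1$, which splits because $H^2(F_n;\mathbb{Z}) = 0$, giving $H' \cong F_n \times A$; tracking the finite-index passages gives $H$ virtually $F_n \times A'$ with $A'$ of finite index in $A$.

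The main obstacle is the geometric input in part~(\ref{item:norm1}): identifying the transverse factor $Y$ of the Product Decomposition Theorem as a tree and checking that the quotient action of $Q$ on it is genuinely proper with finite stabilisers, so that Bass–Serre theory applies. This is precisely where uniform local finiteness, the torsion bound, and the systolic geometry must be combined. By comparison, the crystallographic conclusion in part~(\ref{item:norm2}) and the cohomological splitting in part~(\ref{item:norm1}) are routine once the structure of the invariant product is available.
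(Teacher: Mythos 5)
Your parts (1) and (2) reach the right conclusions, but note that for (2) the paper takes a shorter route than your crystallographic argument: it observes that $N_G(A)$ preserves $\minset{A}$, acts on it properly (as a restriction of a proper action) and cocompactly (because its subgroup $A$ already acts cocompactly there by the Flat Torus Theorem), so the inclusion $A \subset N_G(A)$ is a quasi-isometry and the index is finite. Your version needs a genuinely $N_G(A)$-invariant isometric copy of $\mathbb{E}^2$ on which to define the restriction homomorphism to $\mathrm{Isom}(\mathbb{E}^2)$; the naturally invariant object is $\minset{A}$, which is only quasi-isometric to the plane, so this step would need repair (and once repaired it collapses into the quasi-isometry argument anyway). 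Your reduction of part (3) to the centraliser case, by passing to the index-$\leqslant 2$ subgroup of $H$ that centralises $a$, is a clean way of making precise the paper's remark that the proof of \cite[Proposition~5.7]{OsaPry} carries over from $C_G(A)$ to $N_G(A)$.

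The genuine gap is in your proof of the centraliser statement itself. You assert that the transverse factor $Y$ of the Product Decomposition Theorem is a tree ``since the ambient flat rank is at most $2$, $Y$ has flat rank at most $1$ and in the systolic setting is a tree.'' This inference fails: a systolic complex with no $2$-flats need not be a tree (any $2$-dimensional Gromov-hyperbolic systolic complex is a counterexample), and the Product Decomposition Theorem only asserts that $\minset{h}$ is isomorphic to the product of a systolic complex $Y_h$ with the real line --- it does not identify $Y_h$ as a tree, and in general $Y_h$ has dimension up to $\dim X - 1$. Consequently your appeal to Bass--Serre theory to conclude that $Q = H/A$ is virtually free has no foundation as written; establishing virtual freeness of this quotient is precisely the content of \cite[Proposition~5.7]{OsaPry} and requires a different argument. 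You correctly flag this identification as ``the main obstacle,'' but the justification you offer for it does not work. The surrounding steps --- properness of the induced action of $Q$ on $Y$, and the splitting of the central extension $1 \to A \to H' \to F_n \to 1$ via $H^2(F_n;\mathbb{Z}) = 0$ --- are fine once virtual freeness of $Q$ is actually in hand.
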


\begin{proof}
(1) Follows immediately from \cite[Theorem~6.1(1)]{E1}.

\noindent(2) The proof is essentially the same as the proof of \cite[Lemma~5.10(NM2)]{OsaPry}. We will give a sketch here. Consider a subcomplex $\mathrm{Min}(A) \subset X$, which consists of all vertices of $X$ that are moved the minimal combinatorial distance by all elements of $A$. By the Flat Torus Theorem (\cite[Theorem~6.1(3)]{E1}) we have that $\mathrm{Min}(A)$ is non-empty, $A$--invariant and the action of $A$ on $\mathrm{Min}(A)$ is proper and cocompact. Moreover, it follows from the definition that $\mathrm{Min}(A)$ is also $N_G(A)$--invariant. The action of $N_G(A)$ on $\mathrm{Min}(A)$ is proper (since it is the restriction of a proper action of $G$) and it is cocompact since $A \subset N_G(A)$. This implies that the inclusion $A \subset N_G(A)$ is a quasi-isometry and thus $[N_G(A) : A]$ is finite.   

\noindent(3) In \cite[Proposition~5.7]{OsaPry} the analogous claim is proven for finitely generated subgroups of the centraliser $C_G(A)$. One easily verifies that the exact same proof works if one replaces $C_G(A)$ by $N_G(A)$.
\end{proof}

The second lemma is a mild strengthening of \cite[Proposition~5.14]{OsaPry}.
\begin{lem}[Ascending chain condition]\label{lem:ascchain} 
For any chain
\[A_1 \subset A_2 \subset A_3 \subset \ldots\] of virtually abelian subgroups of $G$ there exists $n>0$ such that $A_i =A_{i+1}$ for all $i\geqslant n$. In particular, any virtually abelian subgroup of $G$ is finitely generated.
\end{lem}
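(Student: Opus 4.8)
The plan is to prove the ascending chain condition first; the final sentence then follows formally. Since $G$ acts properly on a uniformly locally finite, hence countable, complex, $G$ itself is countable, so any virtually abelian subgroup $V$ can be written as an increasing union $\langle S_1\rangle \subseteq \langle S_2\rangle \subseteq \cdots$ of finitely generated subgroups, where $S_1\subseteq S_2\subseteq\cdots$ exhaust a countable generating set of $V$. Each $\langle S_j\rangle$ is again virtually abelian, and once the chain condition is available this union stabilises, forcing $V=\langle S_J\rangle$ to be finitely generated. Thus it suffices to show that every ascending chain of virtually abelian subgroups is eventually constant.

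To each virtually abelian subgroup I would attach its \emph{rank}, the torsion-free rank of a finite-index abelian subgroup, equivalently the supremum of the ranks of its finitely generated free abelian subgroups. By Lemma~\ref{lem:flattorproduct}(\ref{item:rank2}) this supremum is at most $2$, and along the inclusions $A_i\subseteq A_{i+1}$ it is non-decreasing, hence eventually constant. Discarding an initial segment, I may therefore assume that all $A_i$ share a common rank $r\in\{0,1,2\}$. If $r=0$ then every $A_i$ is finite, their orders are non-decreasing and bounded by the global bound on finite subgroups of $G$, so the chain stabilises. For $r\in\{1,2\}$ I would fix a free abelian $A\cong\mathbb{Z}^r$ of finite index in $A_1$ and, for each $i$, a finite-index $A'_i\cong\mathbb{Z}^r$ of $A_i$. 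Since a rank-$r$ subgroup of $\mathbb{Z}^r$ is automatically of finite index, the groups $A'_i$ are pairwise commensurable, and conjugation by any element of $A_i$ carries $A'_i$ to a finite-index subgroup of $A_i$, hence to a commensurable one; therefore every $A_i$ lies in the commensurator $\mathrm{Comm}_G(A)$.

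The finishing move I have in mind is to place the whole chain inside a single finitely generated virtually abelian subgroup of $G$: such a group is polycyclic, hence Noetherian, so any ascending chain of subgroups inside it terminates. The relevant finitely generated virtually abelian group is exactly what Lemma~\ref{lem:flattorproduct}(\ref{item:norm2}) provides when $r=2$ (the normaliser $N_G(A)$ is virtually $A$) and Lemma~\ref{lem:flattorproduct}(\ref{item:norm1}) when $r=1$.

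The hard part will be precisely this last step, namely upgrading the \emph{normaliser} control of Lemma~\ref{lem:flattorproduct} to \emph{commensurator} control, equivalently bounding the indices $[A_i:A\cap A_i]$ uniformly in $i$. Two difficulties arise. First, $A$ need not be normal in $A_i$, so I would have to pass to the finite set of finite-index subgroups of $A$ permuted under conjugation and replace $A$ by a characteristic (or intersection) subgroup lying inside an honest normaliser. Second, and more seriously, nothing seen so far forbids the indices from growing without bound: the subgroups $\tfrac{1}{p^k}\mathbb{Z}\subset\mathbb{Q}$ form an infinite ascending chain of rank-$1$ virtually abelian groups, and indeed $A$ need not have finite index in $A_i$ at all. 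Ruling out this infinite-divisibility phenomenon is where both hypotheses are indispensable: properness of the action (through the resulting discreteness of translation lengths) prevents the torsion-free part from being divided indefinitely, while the bound on torsion controls the finite parts. It is exactly this point that separates the statement from a formal consequence of Lemma~\ref{lem:flattorproduct}, and it mirrors the argument of \cite[Proposition~5.14]{OsaPry}.
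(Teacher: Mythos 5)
Your reduction to finitely generated subgroups of a common rank $r\leqslant 2$, your treatment of the case $r=0$, and your observation that the whole chain lies in $\mathrm{Comm}_G(A)$ are all correct and consistent with the paper. But the proposal stops exactly where the content of the lemma begins: you explicitly defer ``upgrading normaliser control to commensurator control, equivalently bounding the indices $[A_i:A\cap A_i]$ uniformly in $i$,'' and that uniform bound \emph{is} the statement to be proved in the positive-rank cases. Once one knows $[A_i:\langle a\rangle]\leqslant C$ for a fixed infinite cyclic $\langle a\rangle\subset A_1$, the chain stabilises for trivial reasons, so no Noetherian ambient group is needed; and conversely your proposed ``finishing move'' is not available without that bound, since for $r=1$ Lemma~\ref{lem:flattorproduct}(\ref{item:norm1}) applies only to \emph{finitely generated} subgroups of $N_G(A)$ containing $A$ (and yields groups that are virtually $F_n\times A'$, not virtually abelian), while the union of the chain is precisely what is not yet known to be finitely generated. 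So there is a genuine gap, although you have located it accurately and named the right tool.

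For comparison, here is how the paper closes that gap when $r=1$. Fix an infinite-order $a\in A_1$. Each $A_i$, being virtually $\mathbb{Z}$, has by \cite[Proposition~4]{LePi} a maximal finite normal subgroup $N_i$ with $A_i/N_i$ infinite cyclic or infinite dihedral; pick an infinite-order $b_i$ with $[A_i/N_i:\langle b_i\rangle]\leqslant 2$. The element $a$ injects into $A_i/N_i$ and its image is $b_i^{k_i}$ for some $k_i$. Translation discreteness of $G$ (Proposition~\ref{prop:sys-trans-discrete}, proved independently of this lemma, so there is no circularity) implies that a fixed element can be a $k$-th power only for finitely many $k$, giving a uniform bound $|k_i|\leqslant K$; together with the bound $|N_i|\leqslant M$ on torsion this yields $[A_i:\langle a\rangle]\leqslant 2KM$. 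The case $r=2$ is not covered by this argument either and is quoted from \cite[Lemma~5.10(NM1)]{OsaPry}. Your proposal names exactly these ingredients (discreteness of translation lengths for the torsion-free part, the torsion bound for the finite part) but supplies no derivation of the index bound from them, and that derivation is the entire substance of the lemma.
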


\begin{proof} 
First note that if there were a strictly ascending chain of virtually abelian subgroups of $G$ then there would be a strictly ascending chain of finitely generated virtually abelian subgroups. Thus we can assume that every $A_i$ is finitely generated, and therefore by Lemma~\ref{lem:flattorproduct}(\ref{item:rank2}) it is virtually $\mathbb{Z}^{n_i}$ where $n_i\leqslant 2$. Without loss of generality we can assume that every $A_i$ is virtually $\mathbb{Z}^n$ for a fixed $n\leqslant 2$. We consider the three possibilities for $n$. If $n=0$ then all $A_i$'s are finite and the claim follows from the fact that the order of finite subgroups of $G$ is bounded. For $n=2$ the claim is proven in \cite[Lemma~5.10(NM1)]{OsaPry}. It remains to prove the case where $n=1$. 

Let $a \in A_1$ be an infinite order element. We will show that the index $[A_i : \langle a \rangle]$ is uniformly bounded. Since each $A_i$ is virtually $\mathbb{Z}$, by \cite[Proposition~4]{LePi} it contains a maximal normal finite subgroup $N_i$ such that $A_i/N_i$ is either infinite cyclic or infinite dihedral. In either case, the group $A_i/N_i$ contains an infinite order element $b_i$ such that $[A_i/N_i : \langle b_i \rangle] \leqslant 2$. Since $a$ is of infinite order, it injects into the quotient $A_i/N_i$ and thus $a=b_i^{k_i}$ for some $k_{i} \in \mathbb{Z}$. 

In Proposition~\ref{prop:sys-trans-discrete} we show that $G$ is \emph{translation discrete}. It is straightforward to check that for such groups, for a fixed $a \in G$ there is only finitely many $k \in \mathbb{Z}$ such that $a=b^k$ for some $b \in G$. Thus there exists $K \in \mathbb{N}$ such that for all $i$ we have $|k_i| \leqslant K$. Consequently, we have $[\langle b_i \rangle : \langle a \rangle]\leqslant K$ and hence $[A_i/N_i : \langle a \rangle]\leqslant 2K$. Finally, since the order of finite subgroups of $G$ is bounded by some  $M >0$, we obtain that $[A_i : \langle a \rangle]\leqslant 2K|N_i| \leqslant 2KM$. \end{proof}

For a group $H$ let $[H,H]$ denote the commutator subgroup of $H$. Put $H^{(1)} = [H,H]$ and define recursively $H^{(n)}= [H^{(n-1)},H^{(n-1)}] $. Recall that $H$ is \emph{solvable} if $H^{(n)}=\{e\}$ for some $n\geqslant 1$. The smallest number $n$ with this property is called the \emph{solvability rank} of $H$. We are ready now to prove the Theorem. 

\begin{proof}[Proof of the Theorem] 
In the light of Lemma~\ref{lem:ascchain} it suffices to consider finitely generated solvable subgroups of $G$. Let $H$ be such a subgroup. We proceed by induction on the solvability rank of $H$. If $H^{(1)}=\{e\}$ then $H$ is abelian and the claim follows by Lemma~\ref{lem:flattorproduct}(\ref{item:rank2}). Now assume that $H^{(1)}$ is finitely generated virtually abelian of rank at most $2$. Let $A \subset H^{(1)}$ be a free abelian subgroup of finite index. We consider the three possible cases for the rank of $A$:
\begin{enumerate}

\item $rk(A)=0.$ This means that $H^{(1)}$ is finite and thus by \cite[Lemma II.7.9]{BH} the group $H$ contains an abelian subgroup of finite index. The claim follows from Lemma~\ref{lem:flattorproduct}(\ref{item:rank2}).

\item $rk(A)=1.$ Define $A' \subset H^{(1)}$ to be the intersection of all cyclic subgroups of $H^{(1)}$ of index $[H^{(1)}: A]$ (there are only finitely many such subgroups and thus the index $[H^{(1)}:A']$ is finite). By construction $A'$ is a characteristic subgroup of $H^{(1)}$ and hence a normal subgroup of $H$, and therefore we have $H \subset N_G(A')$. Since $H$ is finitely generated, by Lemma~\ref{lem:flattorproduct}(\ref{item:norm1}) we conclude that $H$ is virtually $F_n \times A''$ where $A''$ is a finite-index subgroup of $A'$. Because $H$ is solvable we necessarily have $n \leqslant 1$ for otherwise $H$ would contain a non-abelian free group. This finishes the proof since for $n \leqslant 1$ the product $F_n \times A''$ is isomorphic to either $\mathbb{Z}$ or $\mathbb{Z}^2$.
\item $rk(A)=2.$  Proceeding as in the previous case we can find a finite-index subgroup $A' \subset A$ which is characteristic in $H^{(1)}$ and thus normal in $H$. Therefore $H\subset N_G(A')$ and since by Lemma~\ref{lem:flattorproduct}(\ref{item:norm2}) the index $[N_G(A'): A']$ is finite, it follows that the index $[H:A']$ is finite as well. \qedhere
\end{enumerate} 
\end{proof}

We conclude this section with a generalisation of the Theorem to the case of \emph{elementary amenable} subgroups. The class of elementary amenable groups is obtained from finite groups and $\mathbb{Z}$ by taking extensions, increasing unions, subgroups and quotients. In particular, it contains all solvable groups. We refer the reader to \cite{Hillm,HiLi} for a detailed definition. 

\begin{prop}\label{prop:elementamen}Let $H$ be an elementary amenable subgroup of $G$. Then $H$ is virtually solvable, and thus finitely generated virtually abelian of rank at most $2$.
\end{prop}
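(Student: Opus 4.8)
The plan is to reduce the statement to the Theorem. Concretely, it suffices to prove that an elementary amenable subgroup $H \leqslant G$ is \emph{virtually solvable}: granting this, $H$ contains a solvable subgroup $S$ of finite index, the Theorem shows $S$ is finitely generated virtually abelian of rank at most $2$, and since $[H:S]<\infty$ the same conclusion passes to $H$. Thus the entire difficulty lies in promoting ``elementary amenable'' to ``virtually solvable,'' and for this I would exploit two special features of $G$: first, because there is a bound $M$ on the order of finite subgroups, every \emph{locally finite} subgroup of $G$ is in fact finite of order at most $M$ (a finite subgroup of maximal order absorbs every element, since each such element together with it generates a finite, hence order-$\leqslant\! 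M$, subgroup); second, by Lemma~\ref{lem:flattorproduct}(\ref{item:rank2}) every abelian subgroup of $G$ has rank at most $2$.

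The structural input is the theorem of Hillman--Linnell \cite{HiLi} that an elementary amenable group of \emph{finite Hirsch length} is locally-finite-by-virtually-solvable. To invoke it I first argue that $H$ has finite Hirsch length. Since $X$ is uniformly locally finite it is finite-dimensional, say $\dim X = d$; systolic complexes are contractible, and the fixed-point set of any finite subgroup is again systolic and hence contractible \cite{JS2}, so $X$ is a $d$-dimensional model for the classifying space for proper actions of $H$. Consequently the rational cohomological dimension of $H$ is at most $d$, and for elementary amenable groups this bounds the Hirsch length; thus $h(H)\leqslant d<\infty$.

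Now Hillman--Linnell furnishes a locally finite normal subgroup $L \trianglelefteq H$ with $H/L$ virtually solvable. By the first observation above $L$ is finite, so $H$ is finite-by-(virtually solvable). A finite-by-solvable group is virtually solvable: passing to the finite-index preimage in $H$ of a solvable subgroup of $H/L$, one obtains a group $S$ with $L \trianglelefteq S$, where $L$ is finite and $S/L$ is solvable; the centraliser $C_S(L)$ has finite index in $S$ (its index is at most $|\mathrm{Aut}(L)|$), meets $L$ in its finite centre $Z(L)$, and $C_S(L)/Z(L)$ embeds in $S/L$, so $C_S(L)$ is abelian-by-solvable and hence solvable. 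Therefore $H$ is virtually solvable, which by the reduction in the first paragraph completes the proof.

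The main obstacle is the verification of finite Hirsch length, since this is precisely the hypothesis that Hillman--Linnell cannot dispense with, and it is the step where the geometry must enter: the rank and torsion bounds alone would not rule out an infinitely iterated extension producing infinite Hirsch length. I would therefore take care over the claim that $X$ serves as a finite-dimensional classifying space for proper actions of every subgroup --- this rests on contractibility of the fixed-point sets of finite subgroups --- and over the (standard but not wholly formal) inequality relating Hirsch length to rational cohomological dimension for elementary amenable groups. The remaining steps, namely collapsing the locally finite radical via the bound $M$ and the finite-by-solvable argument, are routine.
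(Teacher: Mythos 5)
Your proposal is correct and follows essentially the same route as the paper: bound the Hirsch length via $h(H)\leqslant \mathrm{cd}_{\mathbb{Q}}H\leqslant \dim X$, apply the Hillman--Linnell theorem to get a locally finite normal subgroup with virtually solvable quotient, use the bound on orders of finite subgroups to conclude that the locally finite part is finite, and then invoke the Theorem. The extra details you supply (that bounded torsion forces locally finite subgroups to be finite, and that finite-by-solvable implies virtually solvable via the centraliser argument) are steps the paper leaves implicit, and they are argued correctly.
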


\begin{proof}We would like to apply \cite[Theorem on page 238]{HiLi} which requires the Hirsch length $h(H)$ of $H$ to be finite. This is the case by the following argument. By \cite[Lemma~2]{Hillm} we have $h(H) \leqslant \mathrm{cd}_{\mathbb{Q}}{H}$, where the latter denotes the rational cohomological dimension of $H$. The group $H$ acts properly on a finite dimensional systolic complex $X$, which is contractible by \cite[Theorem~4.1(1)]{JS2}. It is a standard fact that in this case we have $\mathrm{cd}_{\mathbb{Q}}{H} \leqslant \mathrm{dim}X$ (see \cite[Lemma~3.3]{Pet} for a proof). 

Now by \cite[Theorem on page 238]{HiLi} there is a short exact sequence \[0 \to N \to H \to H/N \to 0\] such that $N$ is locally finite and $H/N$ is virtually solvable. Since the order of finite subgroups of $H$ is bounded, the group $N$ must be finite. This implies that $H$ is itself virtually solvable. \end{proof}

\section{Translation discreteness of systolic groups}\label{sec:translation}
We begin by recalling the definition of translation discrete groups. A \emph{semimetric} on a group $G$ is a function $d \colon G \times G \to \mathbb{R}_{+}$ which is symmetric, satisfies the triangle inequality and for any $g, h_1, h_2 \in G$ we have $d(h_1, h_2)=d(gh_1, gh_2)$. The associated seminorm of an element $g$ is given by $\|g\|=d(e,g)$. The \emph{translation number} of $g$ is defined as
\[\tau(g)=\underset{n \to \infty}{\mathrm{lim\, inf}} \frac{ \|g^n\|}{n}.\]

We say that semimetric $d$ on $G$ is \emph{translation discrete} if the set of translation numbers of infinite order elements of $G$ is bounded away from $0$. A group $G$ is translation discrete if it admits a translation discrete semimetric.

\begin{prop}\label{prop:sys-trans-discrete}Let $G$ be as in the Theorem. Then $G$ is translation discrete.
\end{prop}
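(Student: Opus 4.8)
The plan is to equip $G$ with the semimetric coming from its proper action on the systolic complex $X$ and show that this semimetric is translation discrete. Fix a base vertex $v \in X$ and define $d(g,h) = d_X(gv, hv)$, where $d_X$ denotes the combinatorial (or $\ell^1$/geodesic) metric on the vertex set of $X$. This is manifestly a semimetric in the sense defined above: it is symmetric, satisfies the triangle inequality, and is left-invariant because $G$ acts by simplicial automorphisms that preserve $d_X$. The associated translation number $\tau(g)$ is then, up to the usual quasi-isometry constants, the stable translation length of $g$ acting on $X$. So the task reduces to showing that the stable translation lengths of infinite-order elements of $G$ are uniformly bounded below by a positive constant.

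\noindent The key structural input is the classification of isometries of systolic complexes and, more specifically, the fact that an infinite-order element $g \in G$ acting properly on $X$ is \emph{hyperbolic} (combinatorially), with an invariant quasigeodesic axis along which it translates by a definite combinatorial amount. The first step is therefore to invoke the relevant result from the systolic literature (e.g.\ the work of Elsner or Januszkiewicz--{\'S}wi{\k{a}}tkowski on isometries of systolic complexes, the same source as \cite{E1}) guaranteeing that every infinite-order element either stabilises a point — impossible here, since properness plus the bound on finite subgroups forces point stabilisers to be finite — or translates along an axis with positive translation length. Because $X$ is a simplicial complex with integer combinatorial distances, the minimal displacement $\inf_{x} d_X(x, gx)$ of any hyperbolic element is a positive integer, hence at least $1$. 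The second step is to pass from minimal displacement to the translation number: a standard argument shows $\tau(g) \geqslant \frac{1}{L}\,\min_x d_X(x,gx)$ for some uniform quasigeodesic constant $L$ depending only on $X$ (or, if one uses the honest combinatorial displacement, $\tau(g)$ equals the minimal displacement up to bounded error), so $\tau(g)$ is bounded away from $0$ uniformly over all infinite-order $g$.

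\noindent Concretely, I would first record that point stabilisers for the $G$-action are finite (properness) and uniformly bounded (hypothesis), so no infinite-order element is elliptic. Next I would cite the structural theorem that an infinite-order simplicial automorphism of a systolic complex admits a combinatorial axis on which it acts by translation, and extract that its minimal displacement is a positive integer. Then I would relate this displacement to $\tau(g)$ via the subadditivity of $n \mapsto \|g^n\|$ (Fekete's lemma guarantees the limit exists and equals the infimum), giving a uniform lower bound $\tau(g) \geqslant c > 0$ independent of $g$. This establishes translation discreteness of the chosen semimetric and hence of $G$.

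\noindent I expect the main obstacle to be the passage from the \emph{existence} of an axis to a \emph{uniform} positive lower bound on the translation number. The existence of a combinatorial axis for each individual hyperbolic element is standard, but one must ensure the quasigeodesic constants relating the axis to geodesics in $X$ — and hence the constant relating minimal displacement to $\tau(g)$ — do not degenerate as $g$ varies. The cleanest route is to work with the combinatorial displacement directly, where integrality forces displacement $\geqslant 1$ and $\tau$ inherits a clean lower bound, rather than with a metric in which small positive translation lengths could in principle accumulate at $0$; the uniform local finiteness of $X$ should be exactly what prevents such accumulation.
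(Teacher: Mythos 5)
Your setup coincides with the paper's: the orbit semimetric $\tilde d(g,h)=d_X(gx_0,hx_0)$ and the reduction to a uniform positive lower bound on translation numbers of infinite-order elements. The gap is in the step that is supposed to produce that lower bound. Knowing that the minimal displacement $\min_x d_X(x,gx)$ is a positive integer, hence at least $1$, says nothing about $\tau(g)=\liminf_n\|g^n\|/n$. Subadditivity and Fekete's lemma give $\tau(g)=\inf_n\|g^n\|/n\leqslant\|g\|$, i.e.\ an \emph{upper} bound by the displacement; the inequality you want, $\tau(g)\geqslant\frac{1}{L}\min_x d_X(x,gx)$ for a uniform $L$, is not a standard consequence of anything you have assembled and is false for general isometries of nonpositively curved spaces (parabolic isometries of $\mathrm{CAT}(0)$ spaces displace every point by a definite amount yet have stable translation length $0$). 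Integrality of each individual displacement cannot rule out that $n\mapsto\min_x d_X(x,g^nx)$ grows sublinearly, which is exactly the scenario in which $\tau(g)=0$. Note also that the bound your argument would yield if it worked, namely $\tau(g)\geqslant 1$, is actually false in systolic complexes: there are infinite-order elements with $\tau(g)=m/k<1$.

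What is needed, and what the paper invokes, is Elsner's quantitative description of infinite-order isometries (\cite[Example~1.2 and Theorem~1.3]{E2}): $h$ preserves a ``thick axis'' $A_h$ on whose vertices $d(x,h^nx)=\lfloor nm/k\rfloor$ for integers $m\geqslant 1$ and $1\leqslant k\leqslant\dim X$ depending on $h$. This gives \emph{linear} growth of the displacements of all powers $h^n$, whence $\tau(h)=m/k\geqslant 1/\dim X$; the uniformity comes from the dimension bound on $k$, not from integrality of a single displacement. You correctly located the relevant literature and correctly identified where the difficulty sits, but the mechanism you propose to close it (integrality plus subadditivity) does not suffice: you must import the statement controlling $d(x,h^nx)$ for all $n$ simultaneously, not just the displacement of $h$ itself.
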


\begin{proof}
Let $X$ be a systolic complex on which $G$ acts properly and let $d$ denote the edge-path metric on the $1$--skeleton of $X$. Note that since $G$ acts simplicially on $X$, it acts by isometries with respect to the above metric on $X^{(1)}$. Pick a vertex $x_0 \in X$ and define a semimetric $\tilde{d}$ on $G$ by setting $\tilde{d}(g_1,g_2)= d(g_1x_0, g_2x_0)$. Then the associated seminorm is given by $\|g\|=d(x_0, gx_0)$.

Now suppose $h$ is an infinite order element of $G$. The key idea is that $h$ has a \emph{thick axis} in $X$ which enables us to calculate the norm $\|h^n\|$. More precisely, by \cite[Example~1.2 and Theorem~1.3]{E2} there is an $h$--invariant subcomplex $A_h \subset X$ which is at Hausdorff distance at most $1$ from an $X^{(1)}$--geodesic line, and such that for any vertex $x \in A_h$  we have \[d(x, h^nx)=\lfloor\frac{nm}{k}\rfloor,\] where $m\geqslant 1$ and $\mathrm{dim}X \geqslant k \geqslant 1$ are integers depending only on $h$. Pick a vertex $x \in A_h$ and let $K=d(x_0,x)$. By the triangle inequality we obtain:
\[
d(x_0, h^nx_0) \geqslant d(x,h^nx)-d(x_0,x)-d(h^nx_0,h^nx) =d(x,h^nx)-2K= \lfloor\frac{nm}{k}\rfloor-2K.
\]

Now we can estimate the translation number of $h$. We have

\[
\frac{ \|h^n\|}{n}= \frac{d(x_0, h^nx_0)}{n} \geqslant \frac{\lfloor\frac{nm}{k}\rfloor - 2K}{n} \geqslant \frac{ \frac{nm}{k} - 1 - 2K}{n}= \frac{m}{k} - \frac{1+2K}{n}\] 
and thus
\[\tau(h)=\underset{n \to \infty}{\mathrm{lim\, inf}} \frac{ \|h^n\|}{n} \geqslant \underset{n \to \infty}{\mathrm{lim\, inf}}\bigg ( \frac{m}{k} - \frac{1+2K}{n} \bigg)=  \frac{m}{k} \geqslant  \frac{1}{\mathrm{dim}X}. \qedhere\]

\end{proof}

 For translation discrete groups we have the following version of the Solvable Subgroup Theorem.

\begin{tw}{\cite[Theorem~3.4]{Con}}\label{tw:connsolv}
A solvable subgroup of finite virtual cohomological dimension in a translation discrete group is virtually $\mathbb{Z}^n$.
\end{tw}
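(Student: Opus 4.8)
The plan is to prove the statement for general translation discrete groups, since the hypothesis is not special to the systolic setting, by exploiting the one feature that makes the definition useful: the translation number behaves like a conjugation-invariant, homogeneous seminorm. First I would record the elementary calculus of the translation number $\tau$ attached to a fixed translation discrete semimetric $d$ on $G$. It satisfies $\tau(g^k)=|k|\,\tau(g)$ and $\tau(hgh^{-1})=\tau(g)$, and on every abelian subgroup it is subadditive, hence a genuine seminorm. Translation discreteness provides $\varepsilon>0$ with $\tau(g)\geqslant\varepsilon$ for all infinite order $g$. The first consequence to extract is \emph{bounded root depth}: if $g=h^k$ then $k\varepsilon\leqslant k\,\tau(h)=\tau(g)$, so an infinite order element is a proper power in only boundedly many ways. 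This is precisely the property invoked in the proof of Lemma~\ref{lem:ascchain}.

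Next come the reductions. Finiteness of the virtual cohomological dimension makes $H$ virtually torsion-free of finite Hirsch length $h(H)\leqslant\mathrm{cd}_{\mathbb{Q}}(H)$, exactly as in the proof of Proposition~\ref{prop:elementamen}, so after passing to a torsion-free finite-index subgroup it suffices to treat the torsion-free case and then induct on $h(H)$. The engine of the induction is the observation that a \emph{finite rank} abelian subgroup $A$ with bounded root depth and bounded torsion must be finitely generated: bounded root depth kills any divisible part, and finite rank then forces $A$ to be virtually $\mathbb{Z}^{\mathrm{rk}\,A}$. Applying this to the terminal nonzero term of the derived series of $H$ (an abelian characteristic subgroup $A$) and feeding the quotient $H/A$, of strictly smaller Hirsch length, into the inductive hypothesis, I would conclude that $H$ is virtually polycyclic.

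The heart of the argument, and the step I expect to be the main obstacle, is upgrading ``virtually polycyclic'' to ``virtually abelian'' by ruling out distortion. The point is that a virtually polycyclic group that is \emph{not} virtually abelian always contains an infinite order element $w$ with $\tau(w)=0$, contradicting $\tau\geqslant\varepsilon$. In the virtually nilpotent but non-abelian case one finds a Heisenberg-type subgroup $\langle x,y\rangle$ whose central commutator satisfies $z^{n^2}=[x^n,y^n]$; then $\|z^{n^2}\|\leqslant 2\|x^n\|+2\|y^n\|=O(n)$ by the triangle inequality and left-invariance of $d$, so $\tau(z)=0$. In the remaining case one finds a normal $\mathbb{Z}^r\cong A\trianglelefteq\langle A,t\rangle$ on which conjugation by $t$ acts as a matrix with an eigenvalue of modulus $\neq 1$; since $\tau|_A$ is a $t$-invariant seminorm on $A\otimes\mathbb{R}$, it must vanish on the corresponding eigendirections, and a short reduction then produces a nonzero lattice element of $A$ of vanishing translation number. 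The delicate part is carrying out these distortion estimates uniformly over all subgroups that a general virtually polycyclic $H$ may contain, and checking that the distortion, which is intrinsic to the subgroup, is still detected by the ambient semimetric; the latter is automatic, because $\tau$ is computed from the left-invariant $d$ and every estimate above uses only the triangle inequality and the finiteness of $\|g\|$ for individual elements.

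Combining the steps, $H$ is virtually abelian, of finite rank, and finitely generated by the divisibility argument of the second paragraph, hence virtually $\mathbb{Z}^n$, which is the desired conclusion.
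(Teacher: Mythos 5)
The paper itself offers no proof of this statement: it is imported verbatim from \cite{Con} and used as a black box, so your proposal can only be measured against Conner's published argument. Your overall architecture --- the conjugation-invariant homogeneous seminorm $\tau$, bounded root depth, reduction to the torsion-free case via finite vcd, finite generation of abelian subgroups, polycyclicity, and finally killing distortion via $[x^n,y^n]=[x,y]^{n^2}$ and via integer matrices with an eigenvalue off the unit circle --- is close in spirit to what Conner actually does, and your observation that distortion estimates computed in a finitely generated subgroup are automatically detected by any left-invariant semimetric on the ambient group is correct and is the right way to handle that worry.

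Two steps, however, contain genuine gaps. First, the induction on Hirsch length feeds the quotient $H/A$ into the inductive hypothesis, but that hypothesis applies to \emph{subgroups} of the translation discrete group, and $H/A$ is not one; translation discreteness does not pass to quotients, and $H/A$ carries no semimetric with controlled translation numbers. The standard repair avoids quotients entirely: show that every abelian subgroup of $H$ is finitely generated and invoke Mal'cev's theorem (a solvable group all of whose abelian subgroups are finitely generated is polycyclic) --- exactly the route this paper takes, in the systolic setting, in its alternative proof of the main Theorem. Second, the claim that ``bounded root depth kills any divisible part, and finite rank then forces $A$ to be virtually $\mathbb{Z}^{\mathrm{rk}\,A}$'' is false as a statement about abstract abelian groups: there exist reduced, non--finitely generated, torsion-free abelian groups of rank $2$ in which every nonzero element has only finitely many roots (for instance the subgroup of $\mathbb{Q}^2$ generated by $\mathbb{Z}^2$ together with the vectors $\frac{1}{p_k}(1,k)$, where $p_k$ is the $k$-th prime). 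What actually forces finite generation is the seminorm you have already built: $\tau$ restricted to a torsion-free abelian $A$ of finite rank extends to a continuous seminorm on $A\otimes\mathbb{R}$; if $A$ were not finitely generated over a maximal lattice $B\cong\mathbb{Z}^r$, its elements would accumulate modulo $B$, producing nonzero elements of arbitrarily small seminorm and contradicting $\tau\geqslant\varepsilon$ on infinite order elements. With these two repairs (and with your eigenvalue argument tightened to produce nonzero lattice vectors arbitrarily close to the null space of the invariant seminorm, rather than literal eigenvectors, which need not be rational), the proposal becomes a complete proof.
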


We will now sketch how our Theorem can be derived from Theorem~\ref{tw:connsolv}.

\begin{proof}[Alternative proof of the Theorem]
By Lemma~\ref{lem:ascchain} it is enough to consider finitely generated solvable subgroups of $G$. Suppose $H$ is such a subgroup. Again by Lemma~\ref{lem:ascchain} we obtain that all abelian subgroups of $H$ are finitely generated. Then a theorem of Maltsev \cite[Theorem 2 on page 25]{Seg} implies that $H$ is polycyclic, and thus virtually torsion-free.

It remains to show that $H$ has finite virtual cohomological dimension. The group $H$ acts properly on a systolic complex $X$ and thus any torsion-free subgroup of $H$ acts freely on $X$. Since the complex $X$ is contractible \cite[Theorem~4.1(1)]{JS2}, 
we have $\mathrm{vcd}H \leqslant \mathrm{dim}X < \infty$. Now the claim follows from Theorem~\ref{tw:connsolv} and Lemma~\ref{lem:flattorproduct}(\ref{item:rank2}).
 \end{proof}

\begin{rem}We would like to point out that the two presented approaches are closely related. Namely, one of the key notions used in the proofs of the Flat Torus Theorem and the Product Decomposition Theorem is the \emph{translation length} of an infinite order element, which is an ``unstable'' analogue of the translation number. This concept is implicitly used in the proof of Proposition~\ref{prop:sys-trans-discrete}.

\end{rem}

\begin{rem}In the light of Proposition~\ref{prop:elementamen} one could ask whether every amenable subgroup of $G$ is virtually abelian. In fact, a much stronger statement could be possibly true. Namely, the \emph{Tits alternative} asserts that any finitely generated subgroup of $G$ is either virtually abelian or it contains a non-abelian free group. Since amenable (and in particular solvable) groups do not contain free groups, the Theorem would immediately follow from the Tits alternative. 
\end{rem}


\begin{bibdiv}

\begin{biblist}


\bib{BH}{book}{
  author={Bridson, Martin R.},
  author={Haefliger, Andr{\'e}},
  title={Metric spaces of non-positive curvature},
  series={Grundlehren der Mathematischen Wissenschaften [Fundamental
    Principles of Mathematical Sciences]},
  volume={319},
  publisher={Springer-Verlag, Berlin},
  date={1999},
  pages={xxii+643},
  isbn={3-540-64324-9},
  review={\MR{1744486}},
  doi={10.1007/978-3-662-12494-9},
}

\bib{Con}{article}{
   author={Conner, Gregory R.},
   title={Discreteness properties of translation numbers in solvable groups},
   journal={J. Group Theory},
   volume={3},
   date={2000},
   number={1},
   pages={77--94},
   issn={1433-5883},
   review={\MR{1736519}},
   doi={10.1515/jgth.2000.007},
}

\bib{E1}{article}{
   author={Elsner, Tomasz},
   title={Flats and the flat torus theorem in systolic spaces},
   journal={Geom. Topol.},
   volume={13},
   date={2009},
   number={2},
   pages={661--698},
   issn={1465-3060},
   review={\MR{2469526 (2009m:20065)}},
   doi={10.2140/gt.2009.13.661},
}

\bib{E2}{article}{
   author={Elsner, Tomasz},
   title={Isometries of systolic spaces},
   journal={Fund. Math.},
   volume={204},
   date={2009},
   number={1},
   pages={39--55},
   issn={0016-2736},
   review={\MR{2507689 (2010g:51005)}},
   doi={10.4064/fm204-1-3},
}

\bib{GeSh}{article}{
   author={Gersten, S. M.},
   author={Short, H. B.},
   title={Rational subgroups of biautomatic groups},
   journal={Ann. of Math. (2)},
   volume={134},
   date={1991},
   number={1},
   pages={125--158},
   issn={0003-486X},
   review={\MR{1114609 (92g:20092)}},
   doi={10.2307/2944334},
}

\bib{GrWo}{article}{
   author={Gromoll, Detlef},
   author={Wolf, Joseph A.},
   title={Some relations between the metric structure and the algebraic
   structure of the fundamental group in manifolds of nonpositive curvature},
   journal={Bull. Amer. Math. Soc.},
   volume={77},
   date={1971},
   pages={545--552},
   issn={0002-9904},
   review={\MR{0281122}},
   doi={10.1090/S0002-9904-1971-12747-7},
}

\bib{Hillm}{article}{
   author={Hillman, Jonathan A.},
   title={Elementary amenable groups and $4$-manifolds with Euler
   characteristic $0$},
   journal={J. Austral. Math. Soc. Ser. A},
   volume={50},
   date={1991},
   number={1},
   pages={160--170},
   issn={0263-6115},
   review={\MR{1094067}},
}

\bib{HiLi}{article}{
   author={Hillman, J. A.},
   author={Linnell, P. A.},
   title={Elementary amenable groups of finite Hirsch length are
   locally-finite by virtually-solvable},
   journal={J. Austral. Math. Soc. Ser. A},
   volume={52},
   date={1992},
   number={2},
   pages={237--241},
   issn={0263-6115},
   review={\MR{1143191}},
}

\bib{HuOs}{article}{
    title     ={Large-type Artin groups are systolic},
    author    ={Huang, Jingyin},
    author    ={Osajda, Damian},
    status    ={preprint},
    date      ={2017},
    eprint    ={https://arxiv.org/abs/1706.05473},
}

\bib{JS2}{article}{
   author={Januszkiewicz, Tadeusz},
   author={{\'S}wi{\c{a}}tkowski, Jacek},
   title={Simplicial nonpositive curvature},
   journal={Publ. Math. Inst. Hautes \'Etudes Sci.},
   number={104},
   date={2006},
   pages={1--85},
   issn={0073-8301},
   review={\MR{2264834 (2007j:53044)}},
}

\bib{LePi}{article}{
   author={Juan-Pineda, Daniel},
   author={Leary, Ian J.},
   title={On classifying spaces for the family of virtually cyclic
   subgroups},
   conference={
      title={Recent developments in algebraic topology},
   },
   book={
      series={Contemp. Math.},
      volume={407},
      publisher={Amer. Math. Soc., Providence, RI},
   },
   date={2006},
   pages={135--145},
   review={\MR{2248975 (2007d:19001)}},
   doi={10.1090/conm/407/07674},
}

\bib{LaYa}{article}{
   author={Lawson, H. Blaine, Jr.},
   author={Yau, Shing Tung},
   title={Compact manifolds of nonpositive curvature},
   journal={J. Differential Geometry},
   volume={7},
   date={1972},
   pages={211--228},
   issn={0022-040X},
   review={\MR{0334083}},
}

\bib{OsaPry}{article}{
    title     ={Classifying spaces for families of subgroups for systolic groups},
    author    ={Osajda,Damian},
    author    ={Prytu{\l}a, Tomasz},
    status    ={preprint},
    date      ={2016},
    eprint    ={https://arxiv.org/abs/1604.08478},
}

\bib{Pet}{article}{
   author={Petrosyan, Nansen},
   title={Jumps in cohomology and free group actions},
   journal={J. Pure Appl. Algebra},
   volume={210},
   date={2007},
   number={3},
   pages={695--703},
   issn={0022-4049},
   review={\MR{2324601}},
   doi={10.1016/j.jpaa.2006.11.011},
}

\bib{Seg}{book}{
   author={Segal, Daniel},
   title={Polycyclic groups},
   series={Cambridge Tracts in Mathematics},
   volume={82},
   publisher={Cambridge University Press, Cambridge},
   date={1983},
   pages={xiv+289},
   isbn={0-521-24146-4},
   review={\MR{713786}},
   doi={10.1017/CBO9780511565953},
}

\end{biblist}
\end{bibdiv}
\end{document}